\newtheorem{theorem}{Theorem}[section]
\newtheorem*{theorem*}{Theorem}
\newtheorem*{acknowledgements*}{Acknowledgement}
\newtheorem{claim}[theorem]{Claim}
\theoremstyle{definition}
\newtheorem{definition}[theorem]{Definition}
\theoremstyle{remark}
\numberwithin{equation}{section}
\begin{document}

\title{A forest of Eisensteinian triplets}

\author{Noam Zimhoni}

\email{noam.zimhoni@mail.huji.ac.il}

\subjclass[2010]{11Y50, 11D45, 14G05}

\date{March 3rd 2019}


\keywords{Triangles with 60 degrees angle, Integer triangles}
\begin{abstract}
In 1934 B. Berggren first discovered the surprising result that every Pythagorean triplet
is the pre product of the triplet $(3, 4, 5)$ presented as a column by a product of three matrices,
that every triplet is obtained in this manner exactly once and in primitive form.

In this paper we show a similar result for integer triangles with an angle of 60 degrees (also known as Eisensteinian triplets). We show that
any such triangle is obtained by pre-multiplication $(7,8,5)$ or $(13,15,7)$ by a product of five matrices. 

The result might have applications in enumerating points with rational distance from the origin on the hexagonal lattice.
\end{abstract}

\maketitle
\

\begin{acknowledgements*}
	We would like to thank Nalinpat Ponoi for pointing out a small mistake in the printed version published in American Mathematical Monthly. The mistake does not appear in this version.
\end{acknowledgements*}

\specialsection*{Introduction}
\label{label:sec}

\begin{definition}
A primitive Eisensteinian triangle is an integer triangle with side lengths $(a,b,c)$ such that the angle opposite the side of length $a$ is of $\frac{\pi}{3}$ and $\gcd(a,b,c) = 1$. Here, we will also follow the convention of ordering the triplet such that $b>c$.
\end{definition}

The original Result by Berggren \cite{berggren} was that any Pythagorean triplet is the left product of the triplet $(3, 4, 5)$ presented as a column with a product of the three matrices:
\begin{equation*}
	\begin{pmatrix} 1 & -2 & 2 \\ 2 & -1 & 2 \\ 2 & -2 & 3 \end{pmatrix},
	\begin{pmatrix} 1 & 2 & 2 \\ 2 & 1 & 2 \\ 2 & 2 & 3 \end{pmatrix},
	\begin{pmatrix} -1 & 2 & 2 \\ -2 & 1 & 2 \\ -2 & 2 & 3 \end{pmatrix}
\end{equation*}

Multiple proof of this were published. We use the outlines of the proof given by Shin Ichi Katayama here \cite{katayama2013modified} to prove the following generalization:

\begin{theorem*}

An integer triangle is primitive Eisensteinian if, and only if it has side lengths $(1,1,1)$ or it has side lengths $(a,b,c)$ or $(a,b,b-c)$ such that:
\begin{equation*}
\begin{pmatrix}a\\
b\\
c
\end{pmatrix}
=M \cdot v \text{, where } 
v \in \left\{ \begin{pmatrix}7\\
8\\
5
\end{pmatrix},\begin{pmatrix}13\\
15\\
7
\end{pmatrix}\right\} 
\end{equation*}
And $M$ is a finite (or empty) product of the matrices:
\[
	M_{1}=
	\begin{pmatrix}7&-6&6\\8&-7&7\\4&-4&3 \end{pmatrix},
	M_{2}=
	\begin{pmatrix}7 & 6 & -6\\8 & 7 & -7\\4 & 3 & -4\end{pmatrix},
	M_{3}=
	\begin{pmatrix}7 & 6 & 0\\8 & 7 & 0\\4 & 3 & 1\end{pmatrix},
\]
\[
	M_{4}=
	\begin{pmatrix}7 & 0 & 6\\8 & 0 & 7\\4 & 1 & 3\end{pmatrix},
	M_{5}=
	\begin{pmatrix}7 & 0 & -6\\8 & 0 & -7\\4 & 1 & -4\end{pmatrix}
\]
Any such triangle is obtained exactly once.

\end{theorem*}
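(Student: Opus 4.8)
The plan is to translate the geometric hypothesis, via the law of cosines for the angle $\pi/3$ opposite $a$, into the Diophantine equation
\begin{equation*}
 a^{2}=b^{2}-bc+c^{2}.
\end{equation*}
This is a ternary quadratic form $Q(a,b,c)=b^{2}-bc+c^{2}-a^{2}$, and one checks directly that $Q$ is preserved by the two involutions $(a,b,c)\mapsto(a,c,b)$ and $\sigma\colon(a,b,c)\mapsto(a,b,b-c)$; the second is exactly what produces the ``$(a,b,c)$ or $(a,b,b-c)$'' clause in the statement, so the forest is really a forest on $\sigma$-orbits of primitive solutions ($\sigma$ has no fixed solution, since $b=2c$ forces $a=c\sqrt3$, and $\sigma(1,1,1)=(1,1,0)$ is degenerate, which is why $(1,1,1)$ sits alone). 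The first reduction I would record is the elementary chain of inequalities valid for any primitive solution with $0<c<b$: from $a^{2}-(b-c)^{2}=bc$, $a^{2}-c^{2}=b(b-c)$ and $a^{2}=b^{2}-c(b-c)<b^{2}$ one gets $b-c<a$, $c<a<b$, so ``admissible'' triples lie in an explicit open polyhedral cone $\mathcal A$.

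Next I would establish the forward inclusion. For each $i$ a one-line matrix identity $M_{i}^{\mathsf{T}}JM_{i}=J$, with $J$ the Gram matrix of $Q$, shows $M_{i}$ carries solutions of $Q=0$ to solutions; and since $\det M_{i}=\pm1$ each $M_{i}\in GL_{3}(\mathbb Z)$, so $M_{i}$ and $M_{i}^{-1}$ are both integral and hence preserve the gcd of the entries, i.e. primitivity. It then remains to verify, by a finite list of linear inequality checks using the estimates above, that each $M_{i}$ maps $\mathcal A$ into $\mathcal A$ (up to applying $\sigma$ to return to the chamber $b>c$). Combined with the facts that $(7,8,5),(13,15,7)\in\mathcal A$, that $(1,1,1)$ is the only solution with $b=c$ and is excluded from $\mathcal A$, and that $a$ strictly increases under every $M_{i}$ (for $M_{1}$ the first row gives $a'=7a-6(b-c)>7a-6a=a$ by $a>b-c$; for $M_{2},\dots,M_{5}$ one uses $a>b-c$, $a>c$ similarly), this proves every word $Mv$ in the $M_{i}$ applied to a root lies in $\mathcal A$, and none of these equals $(1,1,1)$ since all have $a\ge7$.

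The heart of the matter — and the step I expect to be the main obstacle — is the descent together with uniqueness. I would partition $\mathcal A$, minus the two root-orbits, into five regions $R_{1},\dots,R_{5}$ cut out by explicit sign/size inequalities read off from the rows of the integer matrices $M_{i}^{-1}$ (the conditions that the candidate $b',c'$ be positive, that $b'>c'$ after possibly applying $\sigma$, etc.), and prove: (i) the $R_{i}$ are pairwise disjoint and cover $\mathcal A$ outside the roots; (ii) on $R_{i}$ the map ($\sigma$ then) $M_{i}^{-1}$ returns to $\mathcal A$ and strictly decreases $a$; (iii) for $j\neq i$ the map $M_{j}^{-1}$ leaves $\mathcal A$. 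Granting this, infinite descent on the positive integer $a$ reduces every admissible triple, in finitely many steps, to $(7,8,5)$ or $(13,15,7)$, which is the reverse inclusion; and the disjointness in (i), equivalently the determinism of the reduction, is precisely the ``at most once'' assertion, so together with the forward direction we get ``exactly once''. The delicate points I anticipate are pinning down the correct defining inequalities for the five chambers, bookkeeping $\sigma$ consistently so that the two chosen root representatives cause no double counting, and handling the finitely many boundary triples — in particular confirming that exactly the orbits of $(7,8,5)$ and $(13,15,7)$ are irreducible (hence two roots), and that $(1,1,1)$ is unreachable from either since everything in the forest has $a\ge7$.

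As a sanity check and for context, I would note that the whole picture can be read through the Eisenstein integers $\mathbb Z[\omega]$, $\omega=e^{2\pi i/3}$: since $b^{2}-bc+c^{2}=N(b-c\omega)$, a primitive solution amounts to $b-c\omega$ being a unit times a square in $\mathbb Z[\omega]$, so the solution set is governed by the rational primes $\equiv1\pmod3$ (the two smallest being $7$ and $13$), exactly paralleling the Gaussian-integer mechanism behind Berggren's original theorem. The six units and complex conjugation account for the $\sigma$- and swap-symmetries, while the five matrices encode the remaining unit action together with the two ``ascending'' multiplications; this is the conceptual reason the generating set, the two roots, and the isolated vertex $(1,1,1)$ appear as they do, and it lets one cross-check the chamber decomposition of the previous paragraph.
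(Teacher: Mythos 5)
Your route is genuinely different from the paper's. The paper never works with the quadratic form directly: it quotes Gilder's parametrization theorem (sides $m^2+mn+n^2$, $m^2+2mn$, and $n^2+2mn$ or $m^2-n^2$ with $m>n>0$ coprime and $m\not\equiv n\pmod 3$, each triangle arising exactly once), so the whole problem collapses to enumerating the admissible fractions $\frac{n}{m}$. These are then generated, each exactly once, by pruning the Stern--Brocot tree modulo $3$ and reading off five branch matrices $A_1,\dots,A_5$, which the parametrization map conjugates into the $M_i$. Existence and uniqueness are thus inherited wholesale from the Stern--Brocot tree, and no descent or chamber decomposition is ever needed. Your plan is instead a Berggren-style infinite descent on the cone of primitive solutions of $a^2=b^2-bc+c^2$; if completed it would be self-contained (no appeal to Gilder) and closer in spirit to Berggren's original argument, at the price of substantially more case analysis.

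However, as written the proposal has a genuine gap exactly where you flag ``the heart of the matter'': items (i)--(iii) --- the partition of $\mathcal A$ minus the two root orbits into five chambers $R_i$ on which exactly one reduction $M_i^{-1}$ (composed with $\sigma$ as needed) descends back into $\mathcal A$ --- are announced but not carried out, and they carry the entire weight of both the reverse inclusion and the ``exactly once'' claim. This step is not routine: because each vertex of the forest is a $\sigma$-orbit, a given triple has up to ten candidate parents ($M_j^{-1}$ applied to $(a,b,c)$ or to $(a,b,b-c)$); the defining inequalities of the $R_i$ must be extracted from the rows of the $M_i^{-1}$ and verified to be pairwise disjoint and covering; and one must separately show that the only orbits with no admissible parent are those of $(7,8,5)$ and $(13,15,7)$. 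Until those inequalities are written down and checked, the proposal establishes only the forward inclusion (the identities $M_i^{\mathsf T}JM_i=J$, $\det M_i=\pm1$, the stability of $\mathcal A$, and the strict growth of $a$, all of which do check out against the paper's trees). A small incidental slip: in the Eisenstein-integer aside, the norm form $b^2-bc+c^2$ is $N(b+c\omega)$ rather than $N(b-c\omega)$ for $\omega=e^{2\pi i/3}$.
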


This gives rise to the following forest of primitive eisensteinian triplets where every such triplet appear exactly once:

\begin{figure}[H]
\scalebox{0.5}{
\begin{tikzpicture}[level distance=4cm, level 1/.style={sibling distance=2.5cm}, level 2/.style={sibling distance=0.5cm}]
\node {(13,15,8/7)}[grow=left]
child { node {(49,55,16/39)}
child { node{(109,119,24/95)}}
child { node{(577,665,297/368)}}
child { node{(673,777,377/400)}}
child { node{(439,504,299/205)}}
child { node{(247,280,187/93)}}
}child { node {(133,153,65/88)}
child { node{(403,448,115/333)}}
child { node{(1459,1680,731/949)}}
child { node{(1849,2135,1056/1079)}}
child { node{(1321,1519,880/639)}}
child { node{(541,609,425/184)}}
}child { node {(181,209,105/104)}
child { node{(643,720,203/517)}}
child { node{(1891,2176,931/1245)}}
child { node{(2521,2911,1456/1455)}}
child { node{(1897,2183,1248/935)}}
child { node{(637,713,513/200)}}
}child { node {(139,160,91/69)}
child { node{(559,629,189/440)}}
child { node{(1387,1595,672/923)}}
child { node{(1933,2232,1127/1105)}}
child { node{(1519,1749,989/760)}}
child { node{(427,475,352/123)}}
}child { node {(43,48,35/13)}
child { node{(223,253,85/168)}}
child { node{(379,435,176/259)}}
child { node{(589,680,351/329)}}
child { node{(511,589,325/264)}}
child { node{(91,99,80/19)}}
};
\end{tikzpicture}
}
\scalebox{0.5}{
\begin{tikzpicture}[level distance=4cm, level 1/.style={sibling distance=2.5cm}, level 2/.style={sibling distance=0.5cm}]
\node {(7,8,3/5)}[grow=right]
child { node {(19,21,5/16)}
child { node{(37,40,7/33)}}
child { node{(229,264,119/145)}}
child { node{(259,299,144/155)}}
child { node{(163,187,112/75)}}
child { node{(103,117,77/40)}}
}child { node {(79,91,40/51)}
child { node{(247,275,72/203)}}
child { node{(859,989,429/560)}}
child { node{(1099,1269,629/640)}}
child { node{(793,912,527/385)}}
child { node{(313,352,247/105)}}
}child { node {(97,112,55/57)}
child { node{(337,377,105/272)}}
child { node{(1021,1175,504/671)}}
child { node{(1351,1560,779/781)}}
child { node{(1009,1161,665/496)}}
child { node{(349,391,280/111)}}
}child { node {(67,77,45/32)}
child { node{(277,312,95/217)}}
child { node{(661,760,319/441)}}
child { node{(931,1075,544/531)}}
child { node{(739,851,480/371)}}
child { node{(199,221,165/56)}}
}child { node {(31,35,24/11)}
child { node{(151,171,56/115)}}
child { node{(283,325,133/192)}}
child { node{(427,493,253/240)}}
child { node{(361,416,231/185)}}
child { node{(73,80,63/17)}}
};
\end{tikzpicture}
}
\end{figure}

The tree is defined such that every node is a multiplication of it's parent with one of the matrices defined in the previous theorem.

\section{Integer Triangles with $\frac{\pi}{3}$ angle}

By the cosine theorem we get that a triangle with side lengths $a,b,c$  has a $\frac{\pi}{3}$ angle
opposite to the side with length $a$ if, and only if $(a,b,c)$ satisfy:
\begin{equation}
	a^2 = b^2 + c^2 - bc
\end{equation}

For each Eisensteinian triplet $(a,b,c)$ the triplet $(a,b,b-c)$ is also Eisensteinian.
a geometric proof is shown in the following figure:

\begin{figure}[H]
\includegraphics[scale=0.3]{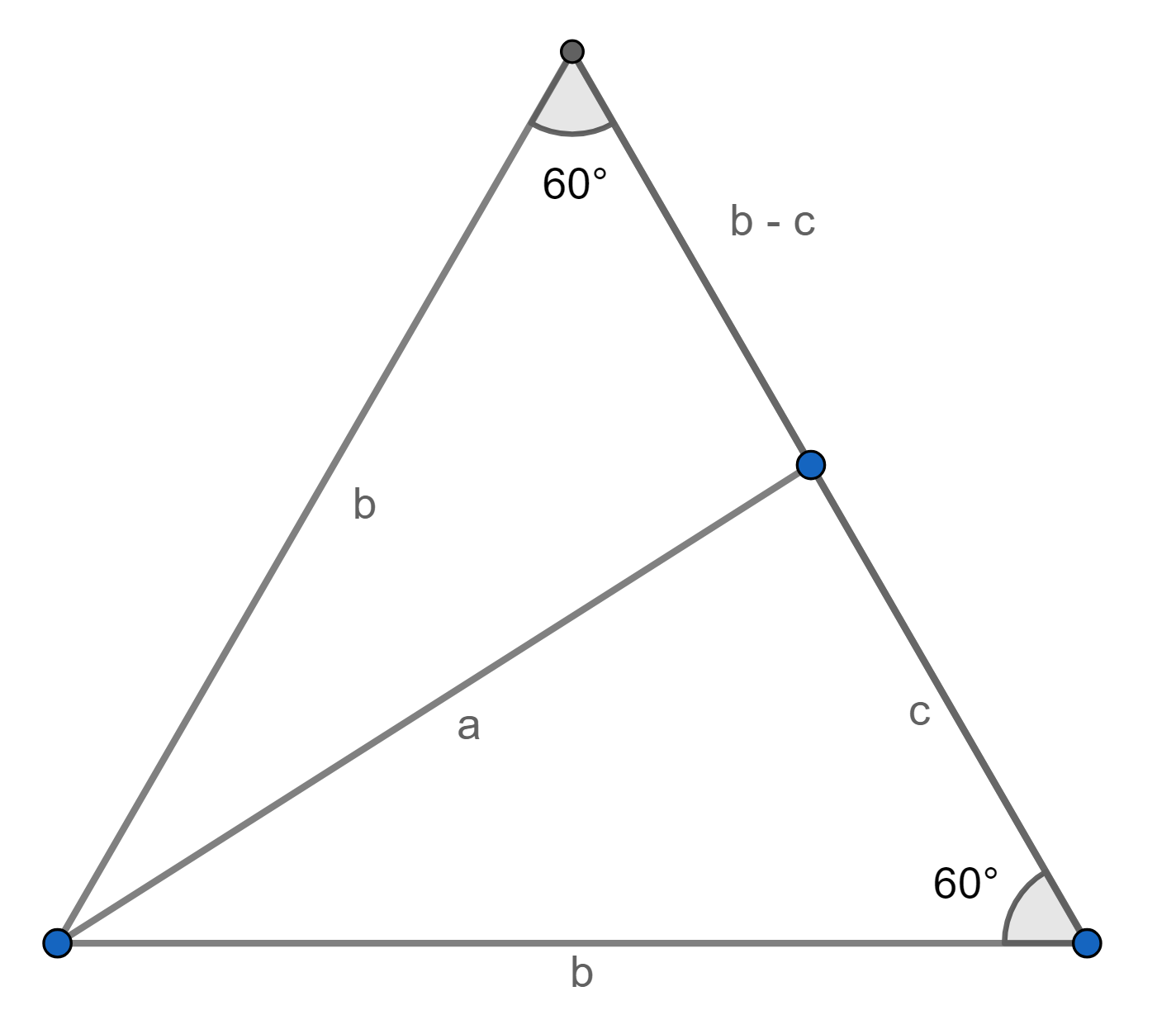}
\caption{the twin Eisensteinian triplet}
\label{fig:twin}
\end{figure}

We will use the following theorem proved here \cite{gilder1982integer}:
\begin{theorem}
	A non-equilateral integer sided triangle is primitive eisensteinian if,
and only if, it is of the class of triangles whose sides are $m^2 + mn + n^2$, $m^2 + 2mn$ and either $n^2 + 2mn$ or $m^2 - n^2$,
where $m$ and $n$ are positive co-prime integers with $m>n>0$ and $m\not\equiv n\left(\mod3\right)$. Any such triangle is achieved exactly once.
\end{theorem}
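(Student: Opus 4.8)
The plan is to recognize the quadratic form $b^2 - bc + c^2$ as the norm form of the ring of Eisenstein integers $R = \mathbf{Z}[\omega]$, where $\omega = e^{2\pi i/3}$ satisfies $\omega^2 + \omega + 1 = 0$ and $N(x + y\omega) = x^2 - xy + y^2$. The defining relation $a^2 = b^2 - bc + c^2$ then reads $a^2 = N(\pi) = \pi\bar\pi$, where $\pi = b + c\omega$ and $\bar\pi = b + c\omega^2$. Since $R$ is a Euclidean domain, hence a UFD with unit group $\{\pm 1, \pm\omega, \pm\omega^2\}$, I would reduce the classification of solutions to a factorization statement about squares in $R$.

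Before that I would record two reductions that use primitivity. First, if a rational prime $p$ divided both $b$ and $c$, then $p^2 \mid a^2$, so $p \mid a$, contradicting $\gcd(a,b,c)=1$; hence $\gcd(b,c)=1$. Second, I would rule out $3 \mid a$: reducing the relation modulo $9$ under the assumption $3 \mid a$ and $\gcd(b,c)=1$ (so $3 \mid b+c$ but $3 \nmid b$ and $3 \nmid c$) gives $b^2 - bc + c^2 \equiv 3 \pmod 9$, which is never a square. Thus $3 \nmid a$ for every primitive triple, and this is exactly the arithmetic content of the condition $m \not\equiv n \pmod 3$ that appears in the parametrization, since $N(m - n\omega) = m^2 + mn + n^2 \equiv 0 \pmod 3$ precisely when $m \equiv n \pmod 3$.

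The core step is the factorization. Let $\lambda = 1 - \omega$ be the prime above $3$ (unique up to associates), with $N(\lambda) = 3$. Any common divisor $\delta$ of $\pi$ and $\bar\pi$ divides both $\pi - \bar\pi = c(\omega - \omega^2)$ and $\omega^2\pi - \omega\bar\pi = b(\omega^2 - \omega)$, hence divides $\gcd(b,c)(\omega - \omega^2) = \omega - \omega^2$, an associate of $\lambda$; so $\gcd(\pi,\bar\pi)$ is a unit or $\lambda$ up to associates. Because $3 \nmid a$ we have $\lambda \nmid \pi$, so $\pi$ and $\bar\pi$ are coprime. As their product is the square $a^2$ and $R$ is a UFD, each is a unit times a square: $\pi = u\,\eta^2$ with $u$ a unit and $\eta = m - n\omega \in R$. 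Expanding $\eta^2 = (m^2 - n^2) - (2mn + n^2)\omega$ and $\bar\eta^2 = (m^2 + 2mn) + (n^2 + 2mn)\omega$, and adjusting by the unique unit that renders both coefficients nonnegative, I would read off $(b,c)$: the conjugate pair $\eta^2, \bar\eta^2$ produces exactly the two listed shapes $b = m^2 + 2mn$ with $c = m^2 - n^2$ or $c = n^2 + 2mn$, while $a = N(\eta) = m^2 + mn + n^2$; the ordering $b > c$ is then automatic since $m > n$. The reverse direction is the routine verification that these formulas satisfy the cosine relation and, together with $\gcd(m,n)=1$ and $m \not\equiv n \pmod 3$, yield $\gcd(a,b,c)=1$.

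The hard part will be the ``achieved exactly once'' claim, which is a genuine bijection statement rather than a mere existence argument. I would prove it by tracking the finite ambiguity in the factorization $\pi = u\eta^2$: the choice of unit $u$, the sign/associate of $\eta$, and the choice between $\pi$ and $\bar\pi$. Folding together the coordinate changes these induce on $(m,n)$ — the action of the six units and of complex conjugation on $R$ — and then normalizing by $m > n > 0$ and $b > c$, I would check that each primitive triple corresponds to a unique admissible pair $(m,n)$ together with a unique choice of twin shape. The main risk is miscounting here: the ramified behaviour at $3$, the degenerate equilateral case $(1,1,1)$, and the interplay between the twin duality $(a,b,c) \leftrightarrow (a,b,b-c)$ and the unit action must be reconciled so that no triple is produced twice and none is omitted.
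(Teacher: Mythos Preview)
The paper does not actually prove this theorem: it quotes the statement, cites Gilder's 1982 note in the \emph{Mathematical Gazette} for the proof, and then uses the result as a black box for the remainder of the argument. So there is no in-paper proof to compare against.

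Your plan via the arithmetic of $\mathbf{Z}[\omega]$ is sound and is the natural conceptual route --- indeed it is why such triples deserve the name ``Eisensteinian''. The preliminary reductions ($\gcd(b,c)=1$; the mod-$9$ computation giving $3\nmid a$) are correct, the coprimality of $\pi=b+c\omega$ and $\bar\pi$ follows exactly as you indicate, and the UFD step then forces $\pi=u\,\eta^{2}$. By contrast, the cited argument of Gilder is elementary: one treats $c^{2}-bc+(b^{2}-a^{2})=0$ as a quadratic in $c$, forces the discriminant $4a^{2}-3b^{2}$ to be a perfect square, and parametrizes from there. Your approach buys a structural explanation for the shape of the formulas and makes the twin map $(a,b,c)\leftrightarrow(a,b,b-c)$ visible as complex conjugation on $R$ up to a unit.

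You have correctly located the only delicate point, the uniqueness bookkeeping. One small caution: your phrase ``the unique unit that renders both coefficients nonnegative'' is slightly too quick, since the region $\{x+y\omega:\ x,y\ge 0\}$ is a $120^{\circ}$ sector and the six associates of a nonzero element are spaced $60^{\circ}$ apart, so generically two of them land there. The correct fundamental domain for the unit action is the sextant $\{x+y\omega:\ x\ge y\ge 0\}$, which matches your convention $b>c$; after squaring, you should then check that the residual two-fold freedom is exactly the choice between $c=n^{2}+2mn$ and $c=m^{2}-n^{2}$. With that verified, the argument is complete.
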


\section{The Stern–Brocot tree}
By the last theorem, it is sufficient to find and number all couples $(m,n) \in \mathbb{N}^2$ S.T. $m>n>0$, $(m,n)=1$ 
and $m\not\equiv n\left(\mod3\right)$.
This is equivalent to finding all factions $0<\frac{n}{m}<1$ in reduced form which satisfy $m\not\equiv n\left(\mod3\right)$.
To find all such fractions we will look at the Stern-Brocot tree discovered by Moritz Stern \cite{stern1858ueber} and  Achille Brocot  \cite{brocot1861calcul}.

The Stern–Brocot tree is a binary tree of the rational numbers where any vertex is the mediant of his predecessor and successor (in an InOrder ordering) in the tree above it where the predecessor of the root is defined to be $\frac{0}{1}$ and the successor is $\frac{1}{1}$. giving rise to the following tree:

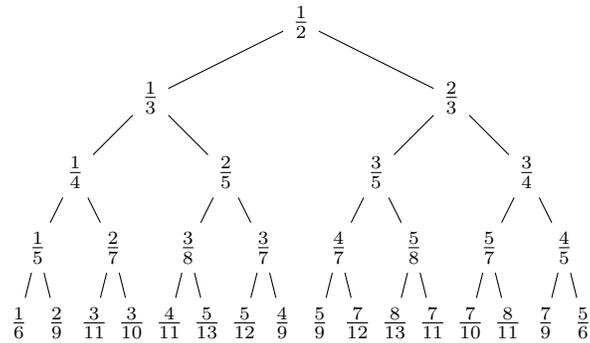
\begin{figure}[H]
\begin{tikzpicture}[level distance=1cm, level 1/.style={sibling distance=4.0cm}, level 2/.style={sibling distance=2.0cm}, level 3/.style={sibling distance=1.0cm}, level 4/.style={sibling distance=0.5cm}]
\node {$\frac{1}{2}$}
child {node {$\frac{1}{3}$}
child {node {$\frac{1}{4}$}
child {node {$\frac{1}{5}$}
child {node {$\frac{1}{6}$}}
child {node {$\frac{2}{9}$}}
}
child {node {$\frac{2}{7}$}
child {node {$\frac{3}{11}$}}
child {node {$\frac{3}{10}$}}
}
}
child {node {$\frac{2}{5}$}
child {node {$\frac{3}{8}$}
child {node {$\frac{4}{11}$}}
child {node {$\frac{5}{13}$}}
}
child {node {$\frac{3}{7}$}
child {node {$\frac{5}{12}$}}
child {node {$\frac{4}{9}$}}
}
}
}
child {node {$\frac{2}{3}$}
child {node {$\frac{3}{5}$}
child {node {$\frac{4}{7}$}
child {node {$\frac{5}{9}$}}
child {node {$\frac{7}{12}$}}
}
child {node {$\frac{5}{8}$}
child {node {$\frac{8}{13}$}}
child {node {$\frac{7}{11}$}}
}
}
child {node {$\frac{3}{4}$}
child {node {$\frac{5}{7}$}
child {node {$\frac{7}{10}$}}
child {node {$\frac{8}{11}$}}
}
child {node {$\frac{4}{5}$}
child {node {$\frac{7}{9}$}}
child {node {$\frac{5}{6}$}}
}
}
};
\end{tikzpicture}
\caption{the Stern-Brocot tree}
\label{fig:tree1}
\end{figure}

\begin{theorem}
	Every rational number between $0$ and $1$ appears exactly once in the Stern-Brocot Tree in reduced form.
\end{theorem}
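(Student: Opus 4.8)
The plan is to attach to every vertex $v$ of the Stern–Brocot tree the pair of fractions $\frac{a}{b}<\frac{c}{d}$ whose mediant $\frac{a+c}{b+d}$ equals $v$, exactly as in the construction of the tree: the root carries $\left(\frac{0}{1},\frac{1}{1}\right)$, and if a vertex carries $\left(\frac{a}{b},\frac{c}{d}\right)$ then its left child carries $\left(\frac{a}{b},\frac{a+c}{b+d}\right)$ and its right child carries $\left(\frac{a+c}{b+d},\frac{c}{d}\right)$. The first step is to prove, by induction on depth, the unimodularity invariant $bc-ad=1$: it holds at the root because $1\cdot 1-0\cdot 1=1$, and it is inherited by the left child since $b(a+c)-a(b+d)=bc-ad$ and by the right child since $(b+d)c-(a+c)d=bc-ad$.

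Two consequences follow immediately. First, $\gcd(a+c,b+d)$ divides $b(a+c)-a(b+d)=1$, so every vertex is a fraction already in reduced form. Second, the mediant inequality $\frac{a}{b}<\frac{a+c}{b+d}<\frac{c}{d}$ holds, hence the bracketing intervals are strictly nested as one descends; in particular every vertex value lies in the open interval $(0,1)$, and the in-order traversal of the tree lists the vertex values in strictly increasing order. This settles ``between $0$ and $1$'' and ``in reduced form'', and reduces the statement to showing that every reduced $\frac{p}{q}\in(0,1)$ occurs exactly once.

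Uniqueness is then the standard binary-search-tree argument: if $\frac{p}{q}$ occurred at two distinct vertices, take their least common ancestor $u$; then either one vertex is $u$ and the other lies in one of its subtrees, or the two vertices lie in the two different subtrees of $u$, and in every case the two values are confined to disjoint open intervals separated by the value of $u$, so they cannot be equal.

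For existence, given a reduced $\frac{p}{q}$ with $0<\frac{p}{q}<1$, descend from the root: at a vertex carrying $\left(\frac{a}{b},\frac{c}{d}\right)$ move to the left child if $\frac{p}{q}<\frac{a+c}{b+d}$, to the right child if $\frac{p}{q}>\frac{a+c}{b+d}$, and stop when equality holds. Maintaining the invariant $\frac{a}{b}<\frac{p}{q}<\frac{c}{d}$, set $x:=bp-aq\ge 1$ and $y:=cq-dp\ge 1$; using $bc-ad=1$ one computes $p=cx+ay$ and $q=dx+by$, verifies that ``go left'' is precisely $x<y$ and ``go right'' precisely $x>y$, and that a left step sends $(x,y)\mapsto(x,y-x)$ while a right step sends $(x,y)\mapsto(x-y,y)$. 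Thus the positive integer $x+y$ strictly decreases at each step, so the descent terminates after finitely many steps; it can only terminate with $\frac{p}{q}=\frac{a+c}{b+d}$, and then $x=y$ divides $\gcd(p,q)=1$, forcing $x=y=1$, so $\frac{p}{q}$ is exactly this vertex. I expect this final paragraph — isolating the monovariant $x+y$ and checking the descent dynamics — to be the crux, even though each verification is only a short $2\times 2$ determinant computation.
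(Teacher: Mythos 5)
Your proof is correct and complete. Note that the paper itself does not prove this theorem --- it simply defers to a citation --- so there is no internal argument to compare against; your write-up is the standard self-contained proof via the unimodularity invariant $bc-ad=1$ (giving reducedness and the nested-interval/binary-search uniqueness) together with the Euclidean descent on $(x,y)=(bp-aq,\,cq-dp)$ with monovariant $x+y$ (giving existence), and all the $2\times 2$ determinant verifications check out.
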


A nice proof of this is given here \cite{SBTreeCut}

So, we are interested in finding all the entries in the Stern Brocot tree for which the denominator and the numerator are of distinct remainder modulus 3.  omitting the fractions $\frac{n}{m}$ for which $m\equiv n\left(\mod3\right)$,and rearranging the branches length we get the following tree:
\begin{figure}[H]
\begin{tikzpicture}
\node {$\frac{1}{2}$}
child[level distance = 0.5cm, sibling distance = 2.5cm] {node {$\frac{1}{3}$}
child[level distance = 1cm, sibling distance = 5cm] {
child[level distance = 0.5cm, sibling distance = 2.5cm] {node {$\frac{1}{5}$}
	child[level distance = 0.5cm, sibling distance = 0.5cm] {node {$\frac{1}{6}$}
	child[level distance = 1cm, sibling distance = 1cm] {
	child[level distance = 0.5cm, sibling distance = 0.5cm] {node {$\frac{1}{8}$}}
	child[level distance = 0.5cm, sibling distance = 0.5cm] {node {$\frac{2}{13}$}}
	}
	child[level distance = 1cm, sibling distance = 1cm] {
	child[level distance = 0.5cm, sibling distance = 0.5cm] {node {$\frac{3}{17}$}}
	child[level distance = 0.5cm, sibling distance = 0.5cm] {node {$\frac{3}{16}$}}
	}
	}
	child[level distance = 2cm, sibling distance = 2cm] {node{$\frac{2}{9}$}}
}
child[level distance = 0.5cm, sibling distance = 2.5cm] {node {$\frac{2}{7}$}
	child[level distance = 2cm, sibling distance = 2cm]{node{$\frac{3}{11}$}}
	child[level distance = 0.5cm, sibling distance = 0.5cm] {node{$\frac{3}{10}$}
	child[level distance = 1cm, sibling distance = 1cm] {
	child[level distance = 0.5cm, sibling distance = 0.5cm] {node{$\frac{7}{24}$}}
	child[level distance = 0.5cm, sibling distance = 0.5cm] {node{$\frac{8}{27}$}}
	}
	child[level distance = 1cm, sibling distance = 1cm]{
	child[level distance = 0.5cm, sibling distance = 0.5cm] {node{$\frac{7}{23}$}}
	child[level distance = 0.5cm, sibling distance = 0.5cm] {node{$\frac{5}{16}$}}
	}
	}
}
}
child[level distance = 1cm, sibling distance = 5cm] {
child[level distance = 0.5cm, sibling distance = 2.5cm] {node {$\frac{3}{8}$}
	child[level distance = 0.5cm, sibling distance = 0.5cm] {node {$\frac{4}{11}$}
	child[level distance = 1cm, sibling distance = 1cm] {
	child[level distance = 0.5cm, sibling distance = 0.5cm] {node {$\frac{6}{17}$}}
	child[level distance = 0.5cm, sibling distance = 0.5cm] {node {$\frac{9}{25}$}}
	}
	child[level distance = 1cm, sibling distance = 1cm] {
	child[level distance = 0.5cm, sibling distance = 0.5cm] {node {$\frac{11}{30}$}}
	child[level distance = 0.5cm, sibling distance = 0.5cm] {node {$\frac{10}{27}$}}
	}
	}
	child[level distance = 2cm, sibling distance = 2cm] {node{$\frac{5}{13}$}}
}
child[level distance = 0.5cm, sibling distance = 2.5cm] {node {$\frac{3}{7}$}
	child[level distance = 2cm, sibling distance = 2cm]{node{$\frac{5}{12}$}}
	child[level distance = 0.5cm, sibling distance = 0.5cm] {node{$\frac{4}{9}$}
	child[level distance = 1cm, sibling distance = 1cm] {
	child[level distance = 0.5cm, sibling distance = 0.5cm] {node{$\frac{10}{23}$}}
	child[level distance = 0.5cm, sibling distance = 0.5cm] {node{$\frac{11}{25}$}}
	}
	child[level distance = 1cm, sibling distance = 1cm]{
	child[level distance = 0.5cm, sibling distance = 0.5cm] {node{$\frac{9}{20}$}}
	child[level distance = 0.5cm, sibling distance = 0.5cm] {node{$\frac{6}{13}$}}
	}
	}
}
}
}
child[level distance = 2cm, sibling distance = 10cm] {node {$\frac{2}{3}$}
	child[level distance = 0.5cm, sibling distance = 0.5cm] {node {$\frac{3}{5}$}
	child[level distance = 1cm, sibling distance = 1cm] {
	child[level distance = 0.5cm, sibling distance = 0.5cm] {node {$\frac{5}{9}$}}
	child[level distance = 0.5cm, sibling distance = 0.5cm] {node {$\frac{7}{12}$}}
	}
	child[level distance = 1cm, sibling distance = 1cm] {
	child[level distance = 0.5cm, sibling distance = 0.5cm] {node {$\frac{8}{13}$}}
	child[level distance = 0.5cm, sibling distance = 0.5cm] {node {$\frac{7}{11}$}}
	}
	}
	child[level distance = 2cm, sibling distance = 2cm] {node{$\frac{3}{4}$}}
};
\end{tikzpicture}
\caption{the modified Stern-Brocot tree}
\label{fig:tree2}
\end{figure}
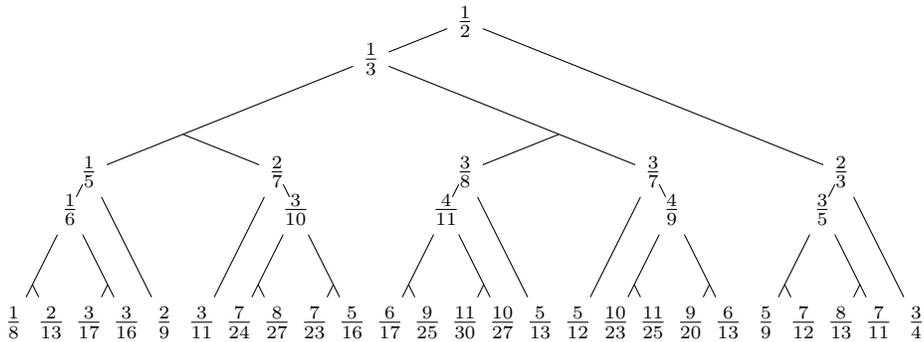

\begin{claim}
	This pattern of ommited fractions (for which the denominator and the numerator are equivalent $\mod 3$) continues forever.
\end{claim}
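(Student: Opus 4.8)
The plan is to show that the way fractions get omitted is governed by a small, self-contained finite recursion on the Stern--Brocot tree, so that the local picture of Figure~\ref{fig:tree2} cannot help but repeat at every depth. Recall that a vertex $v$ of the tree of Figure~\ref{fig:tree1} is the mediant $\frac{p_1+p_2}{q_1+q_2}$ of its in-order neighbours $\frac{p_1}{q_1}$ (on the left) and $\frac{p_2}{q_2}$ (on the right), that the root $\frac12$ has neighbours $\frac01$ and $\frac11$, and that the left child of $v$ has neighbours $\frac{p_1}{q_1}$ and $v$, while the right child has neighbours $v$ and $\frac{p_2}{q_2}$. To each $v$ I attach the state
\[
	s(v)=\bigl(\,p_1-q_1 \bmod 3,\; p_2-q_2 \bmod 3\,\bigr)\in(\mathbb{Z}/3)^2 .
\]
Writing $s(v)=(x,y)$, the vertex $v$ is omitted --- that is, its numerator and denominator agree mod $3$ --- precisely when $(p_1+p_2)-(q_1+q_2)\equiv x+y\equiv 0\pmod 3$; and reading off the neighbours of the two children gives the transition rules $s(\text{left child})=(x,\,x+y)$ and $s(\text{right child})=(x+y,\,y)$, with base value $s\bigl(\tfrac12\bigr)=(2,0)$.

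The decisive point is that this recursion never leaves the eight non-zero vectors of $(\mathbb{Z}/3)^2$: either by a one-line induction (both maps $(x,y)\mapsto(x,x+y)$ and $(x,y)\mapsto(x+y,y)$ preserve non-vanishing, and $(2,0)\ne(0,0)$), or by noting that these are the two elementary generators of $SL_2(\mathbb{Z}/3)$ acting on $(\mathbb{Z}/3)^2\setminus\{0\}$. It then remains to write out the resulting $8$-state transition-and-output table, which is a finite check. It shows that vertices come in exactly three flavours: the states $(1,2)$ and $(2,1)$ are the omitted vertices, and both of their children lie in kept states; the states $(1,1)$ and $(2,2)$ are kept vertices both of whose children are omitted; and the four states $(1,0),(0,1),(2,0),(0,2)$ are kept vertices whose two children are again kept. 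Starting from $(2,0)$ one recovers exactly the local structure drawn in Figure~\ref{fig:tree2} for the first several levels.

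Because the decoration of the tree by ``kept or omitted, with children of such-and-such flavour'' is the output of this fixed finite transducer driven only by the autonomously evolving state $s(v)$ --- and every one of the eight states recurs at all sufficiently large depths --- the pattern of omissions is self-similar and propagates to every level; equivalently, the subtree of kept/omitted vertices hanging below $v$ depends only on $s(v)$. That is the assertion of the claim. I do not anticipate a genuine obstacle here: the only real tasks are to set up the base state and the two transition maps correctly and to settle what ``the pattern continues forever'' should mean precisely --- namely the statement just made about the closed $8$-state system --- after which the verification is the short finite computation indicated above. (This same trichotomy of states is what will later let the kept fractions be reorganised so that the five matrices $M_1,\dots,M_5$ of the Theorem emerge.)
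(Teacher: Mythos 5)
Your proof is correct and is essentially the paper's argument made explicit: the paper likewise tracks $m-n \bmod 3$ on the tree and observes that at the leaves of the fundamental pattern one in-order neighbour is $\equiv 0$ and the other matches the node itself, which is exactly your statement that those leaves land back in the states $(x,0)$ or $(0,x)$. Your reformulation as a closed $8$-state transducer on $(\mathbb{Z}/3)^2\setminus\{0\}$ is a cleaner and more rigorous way to say the same thing, but it is not a different approach.
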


\begin{proof} 
	when looking on the first occurrence of the patterned tree, S.T. for every fraction $\frac{n}{m}$ we write $m-n \mod 3$ we get:

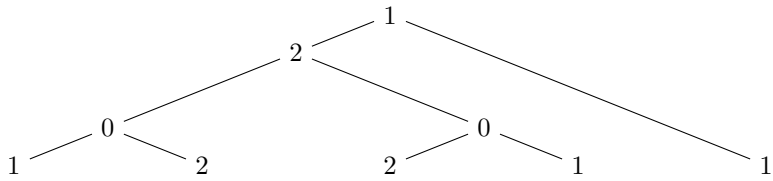
\begin{figure}[H]
\begin{tikzpicture}
\node {$1$}
child[level distance = 0.5cm, sibling distance = 2.5cm] {node {$2$}
child[level distance = 1cm, sibling distance = 5cm] {node{$0$}
child[level distance = 0.5cm, sibling distance = 2.5cm] {node {$1$}}
child[level distance = 0.5cm, sibling distance = 2.5cm] {node {$2$}}
}
child[level distance = 1cm, sibling distance = 5cm] {node{$0$}
child[level distance = 0.5cm, sibling distance = 2.5cm] {node {$2$}}
child[level distance = 0.5cm, sibling distance = 2.5cm] {node {$1$}}
}
}
child[level distance = 2cm, sibling distance = 10cm] {node {$1$}};
\end{tikzpicture}
\caption{the difference $mod 3$ tree}
\label{fig: tree_mod}
\end{figure}
	Now, since each leaf has one of it's predecessor or successor zero and the other the same as his, we get the it's subtree is of the same pattern of difference between enumerator and denominator modulu 3.
\end{proof}

\section{The Modular Tree}
Taking the tree we now wish to understand going through paths in the tree in matrix multiplication terms. We can take each fraction and replace it with the matrix who's entries are the successor and predecessor expressed as column vectors. For example, $\frac{1}{2}$'s matrix will be 
$\begin{pmatrix}0 & 1\\1 & 1\end{pmatrix}$. For entries where the pattern flip in direction in the subtree we will flip the two columns. This will compensate for the change in direction when moving to matrix multiplication, since it is equivalent to flipping the entire subtree.

\begin{figure}[H]
\begin{tikzpicture}
\node {$\begin{pmatrix}0 & 1\\1 & 1\end{pmatrix}$}
child[level distance = 0.5cm, sibling distance = 2.5cm] {node {$\begin{pmatrix}0 & 1\\1 & 2\end{pmatrix}$}
child[level distance = 1cm, sibling distance = 5cm] {
child[level distance = 0.5cm, sibling distance = 2.5cm] {node {$\begin{pmatrix}0 & 1\\1 & 4\end{pmatrix}$}
}
child[level distance = 0.5cm, sibling distance = 2.5cm] {node {$\begin{pmatrix}1 & 1\\3 & 4\end{pmatrix}$}
}
}
child[level distance = 1cm, sibling distance = 5cm] {
child[level distance = 0.5cm, sibling distance = 2.5cm] {node {$\begin{pmatrix}1 & 2\\3 & 5\end{pmatrix}$}
}
child[level distance = 0.5cm, sibling distance = 2.5cm] {node {$\begin{pmatrix}1 & 2\\2 & 5\end{pmatrix}$}
}
}
}
child[level distance = 2cm, sibling distance = 10cm] {node {$\begin{pmatrix}1 & 1\\2 & 1\end{pmatrix}$}
};
\end{tikzpicture}
\caption{the modified Stern-Brocot tree}
\label{fig:treematrices}
\end{figure}
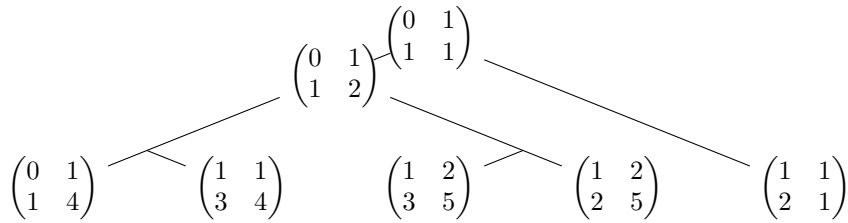
For every $\frac{n}{m}$ fraction in the modified tree we get that if $\begin{pmatrix}a & c\\b & d\end{pmatrix}$ is the related matrix we get:
\[
	\begin{pmatrix}a & c\\b & d\end{pmatrix}\begin{pmatrix}1\\1\end{pmatrix}=\begin{pmatrix}n\\m\end{pmatrix}
\]
If the root of a single occurrence of the pattern is $\begin{pmatrix}a & c\\b & d\end{pmatrix}$ we get that the rest of it's pattern will be as following:
\begin{center}
\begin{tikzpicture}
\node {$\begin{pmatrix}a & c\\b & d\end{pmatrix}$}
child[level distance = 1cm, sibling distance = 3cm] {node {$\begin{pmatrix}a & a+c\\b & b+d\end{pmatrix}$}
child[level distance = 2cm, sibling distance = 6cm] {
child[level distance = 1cm, sibling distance = 3cm] {node {$\begin{pmatrix}a & 3a+c\\b & 3b+d\end{pmatrix}$}
}
child[level distance = 1cm, sibling distance = 3cm] {node {$\begin{pmatrix}2a+c & 3a+c\\2b+d & 3b+d\end{pmatrix}$}
}
}
child[level distance = 2cm, sibling distance = 6cm] {
child[level distance = 1cm, sibling distance = 3cm] {node {$\begin{pmatrix}2a+c & 3a+2c\\2b+d & 3b+2d\end{pmatrix}$}
}
child[level distance = 1cm, sibling distance = 3cm] {node {$\begin{pmatrix}a+c & 3a+2c\\b+d & 3b+2d\end{pmatrix}$}
}
}
}
child[level distance = 4cm, sibling distance = 12cm] {node {$\begin{pmatrix}a+c & c\\b+d & d\end{pmatrix}$}
};
\end{tikzpicture}
\end{center}
Letting:
\begin{align*}
	S'=\begin{pmatrix}1 & 1\\0 & 1\end{pmatrix}, F_1=\begin{pmatrix}1 & 3\\0 & 1\end{pmatrix}, F_2=\begin{pmatrix}2 & 3\\1 & 1\end{pmatrix}\\
	F_3=\begin{pmatrix}2 & 3\\1 & 2\end{pmatrix}, F_4=\begin{pmatrix}1 & 3\\1 & 2\end{pmatrix}, F_5=\begin{pmatrix}1 & 0\\1 & 1\end{pmatrix}\end{align*}
We get:
\begin{align*}
	\begin{pmatrix}a & c\\b & d\end{pmatrix}S'&=\begin{pmatrix}a & a+c\\b & b+d\end{pmatrix}\\
	\begin{pmatrix}a & c\\b & d\end{pmatrix}F_1&=\begin{pmatrix}a & 3a+c\\b & 3b+d\end{pmatrix}\\
	\begin{pmatrix}a & c\\b & d\end{pmatrix}F_2&=\begin{pmatrix}2a+c & 3a+c\\2b+d & 3b+d\end{pmatrix}\\	
	\begin{pmatrix}a & c\\b & d\end{pmatrix}F_3&=\begin{pmatrix}2a+c & 3a+2c\\2b+d & 3b+2d\end{pmatrix}\\
	\begin{pmatrix}a & c\\b & d\end{pmatrix}F_4&=\begin{pmatrix}a+c & 3a+2c\\b+d & 3b+2d\end{pmatrix}\\
	\begin{pmatrix}a & c\\b & d\end{pmatrix}F_5&=\begin{pmatrix}a+c & c\\b+d & d\end{pmatrix}
\end{align*}

Therefore, we get that every fraction $\frac{n}{m}$ such that $m\not\equiv n\left(\mod3\right)$ is of the form:
\begin{multline}
	\begin{pmatrix}0 & 1\\1 & 1\end{pmatrix}\left(\prod_{i=1}^kF_{\alpha_i}\right)(S')^{\varepsilon}\begin{pmatrix}1\\1\end{pmatrix}=\begin{pmatrix}n\\m\end{pmatrix}\\ 
	\text{where }\forall i (1\leq\alpha_i\leq5), \text{ and } \varepsilon \in \{0,1\}
\end{multline}

Let:

\begin{equation*}
	A_i = \begin{pmatrix}0 & 1\\1 & 1\end{pmatrix}F_i\begin{pmatrix}0 & 1\\1 & 1\end{pmatrix}^{-1},\text{  }
	S=\begin{pmatrix}0 & 1\\1 & 1\end{pmatrix}S'\begin{pmatrix}0 & 1\\1 & 1\end{pmatrix}^{-1}
\end{equation*}
 And so,
\begin{multline}
	\left(\prod_{i=1}^kA_{\alpha_i}\right)S^{\varepsilon}\begin{pmatrix}0 & 1\\1 & 1\end{pmatrix}\begin{pmatrix}1\\1\end{pmatrix}=
	\left(\prod_{i=1}^kA_{\alpha_i}\right)S^{\varepsilon}\begin{pmatrix}1\\2\end{pmatrix}=
	\begin{pmatrix}n\\m\end{pmatrix}\\ 
	\text{where }\forall i (1\leq\alpha_i\leq5), \text{ and } \varepsilon \in \{0,1\}
\end{multline}
Where:
\begin{align*}
	S=\begin{pmatrix}1&0\\1&1 \end{pmatrix}, A_1=\begin{pmatrix}1 & 0\\3 & 1\end{pmatrix}, A_2=\begin{pmatrix}0 & 1\\1 & 3\end{pmatrix}\\
	A_3=\begin{pmatrix}1 & 1\\2 & 3\end{pmatrix}, A_4=\begin{pmatrix}1 & 1\\3 & 2\end{pmatrix}, A_5=\begin{pmatrix}0 & 1\\-1 & 2\end{pmatrix}\end{align*}

\section{Proof of the result}

Our first matrix is $S$ which splits our forest to it's two trees, since $S \begin{pmatrix}1\\2\end{pmatrix}=\begin{pmatrix}1\\3\end{pmatrix}$ and the two corresponding triplets are $(7,8,5/3)$ and $(13,15,7/8)$.

If $\frac{n}{m}$ is the fraction related to the two twin triplets $(n^2 + nm + m^2, m^2 +2nm, n^2+2nm/m^2-n^2)$ we have that applying $A_1$ to $\begin{pmatrix}n\\m\end{pmatrix}$ results in:
\begin{align*}
	A_1\frac{n}{m}=\frac{n}{3n+m}&\mapsto 
	\begin{pmatrix}n^2 +n(3n+m)+ (3n+m)^2  \\2n(3n+m)+(3n+m)^2\\ n^2+2n(3n+m)  \end{pmatrix}
	&=\begin{pmatrix}13n^2+7nm+m^2\\15n^2+8nm+m^2\\7n^2+2nm \end{pmatrix}
	\\&=\begin{pmatrix}7&-6&6\\8&-7&7\\4&-4&3 \end{pmatrix}
	\begin{pmatrix}n^2 +nm+ m^2  \\2nm+m^2\\ n^2+2nm\end{pmatrix}
	&=M_1 \begin{pmatrix}n^2 +nm+ m^2  \\2nm+m^2\\ n^2+2nm\end{pmatrix}
\end{align*}
So, in the same manner, multiplying the $m,n$ vector by $A_i$ is equivalent to multiplying the resulting triplet by $M_i$. Since we already established that the tree spans all $m,n$ vectors where $n\not\equiv m \mod 3$ and the all triplets are the result of inserting such $m,n$ to the formula and that every twin triplet is achieved once in primitive form, we get our result, as stated in the introduction \ref{label:sec}.

\bibliographystyle{plain}
\bibliography{bibl}

\end{document}